\documentclass[preprint,12pt]{elsarticle}

\usepackage{multirow}
\usepackage{graphics}
\usepackage{tikz}

\usepackage[fleqn]{amsmath}

\usepackage{amsthm}
\usepackage{nccmath}
\usepackage{algorithm, algorithmic}
\usepackage{doi}
\usepackage{subcaption}

\newtheorem{theorem}{Theorem}[section]

\newtheorem{defn}[theorem]{Definition}

\newcommand{\diag}{\mathop{\mathrm{diag}}}  

\journal{Journal of Computational Physics}

\begin{document}

\begin{frontmatter}



\title{High-Resolution Solvers for 3D Helmholtz Scattering Problems Using PFFT and Eigenvector-Based Preconditioning}


\author[1]{Yury Gryazin}
\author[1]{Ron Gonzales}
\author[2]{Xiaoye Sherry Li}

\address[1]{Mathematics Department, Idaho State University, 921 S 8th Ave, STOP 8085, Pocatello, ID 83209-8085, USA.}
\address[2]{Lawrence Berkeley National Laboratory, Berkeley, CA, USA.}

\begin{abstract}
This paper presents an efficient Krylov subspace iterative solver for the three-dimensional (3D) Helmholtz equation with non-constant coefficients and absorbing boundary conditions, combining high-resolution compact schemes with low-order preconditioners. To mitigate numerical dispersion and reduce pollution error, we employ fourth- and sixth-order compact finite-difference schemes, thereby significantly softening the strict points-per-wavelength requirement. The resulting large, ill-conditioned linear systems are solved using a preconditioned GMRES method. The key innovation lies in the construction of the preconditioner: we introduce two highly efficient direct solvers—one based on a low-dimensional eigenvector transformation (EigT) and another on a partial Fast Fourier Transform (PFFT) algorithm—both derived from a lower-order approximation of the original problem that incorporates the absorbing boundary conditions. The motivation and efficacy of this lower-order preconditioning strategy for high-resolution schemes are analyzed through model problems, providing insight into the convergence rate. The theoretical analysis is validated by a comprehensive set of numerical experiments, demonstrating the method's performance for realistic problem sizes and parameters.
\end{abstract}

\begin{keyword}
Krylov subspace methods, Helmholtz equation, Eigenvectors transform, PFFT, GMRES, Compact finite-difference schemes, Scattering problems, Absorbing boundary conditions
\end{keyword}

\end{frontmatter}



\section{Introduction}

The pursuit of enhanced resolution in numerical algorithms remains a critical endeavor across many scientific and engineering disciplines. In numerous applications, the use of lower-order approximations for discretizing partial differential equations fails to yield results of sufficient accuracy on computationally feasible grid sizes. This demand for improved accuracy necessitates modifications to both the discretized systems and the numerical solvers themselves. In this paper, we develop a high-resolution, parallel approach for solving high-frequency three-dimensional scattering problems, leveraging novel lower-order preconditioners based on a Partial Fast Fourier Transform (PFFT) and Eigenvector Transform (EigT) methodology.

In our previous publications \cite{gkl, yg2, ggl}, we demonstrated the efficacy of this overarching methodology using standard FFT-type preconditioners derived from lower-order approximations. However, a significant limitation of conventional FFT solvers is their restriction to problems with Dirichlet, Neumann, or periodic boundary conditions within the preconditioning matrix. This inherent limitation creates a discrepancy between the boundary conditions of the high-resolution scheme and those of the preconditioner, leading to a substantial increase in the number of iterations required for convergence in the resulting iterative method.

This paper directly addresses this shortcoming by incorporating a lower-order approximation of the absorbing boundary conditions directly into the preconditioning matrix. The implementation of our proposed preconditioners involves solving two auxiliary one-dimensional eigenvalue problems, which facilitates a fast, parallel, direct solution of the 3D Helmholtz equation with constant coefficients and absorbing boundary conditions. This parallel algorithm provides an excellent and reliable preconditioner for medium-sized grids, extending to up to a thousand points in each direction of the three-dimensional domain.

The algorithm can be further enhanced by combining a 2D FFT solution (applicable under Dirichlet, Neumann, or periodic conditions) with a direct correction of the solution at the boundary points for each 2D slice of the 3D domain. This hybrid approach significantly accelerates the iterative solution on large grids, particularly those with sizes optimal for FFTs. For applications constrained by desktop computing resources and random-access memory limitations, the ability to obtain consistent results on arbitrary midsize grids presents a distinct advantage. Consequently, EigT-type algorithms present a valuable alternative to PFFT-type methods, which are more suited for large FFT-optimal grid sizes.

To illustrate the efficiency of the new GMRES solvers with PFFT-type and EigT-type preconditioners, we conduct a comprehensive comparison with previously developed FFT-preconditioned GMRES algorithms. Our numerical experiments demonstrate that the proposed preconditioned iterative solvers can outperform the standard GMRES-FFT combination \cite{gkl, yg2, eo} by close to an order of magnitude in the majority of test cases. We note that the solution of Helmholtz equations remains an actively developing research area, with significant recent contributions reflected in the following works: \cite{chen2021complex, galkowski2025numerical, lucido2024helmholtz, matsumoto2024fast, juraev2023applications, juraev2024helmholtz, klibanov2015two, kow2024minimization, yang2023novel}.

\subsection*{Problem Formulation and Methodology}
We consider a scattering problem formulated as the 3D Helmholtz equation:
\begin{equation}
\nabla ^{2}u + k^{2}(x,y,z)u = f(x,y,z), \quad \text{in } \Omega, \label{problem}
\end{equation}
subject to absorbing boundary conditions:
\begin{equation}
Bu = g, \quad \text{on } \partial \Omega, \label{bc}
\end{equation}
where $\Omega$ is a three-dimensional rectangular domain, $k(x,y,z)$ is a complex-valued variable coefficient, $\partial \Omega$ is the boundary of $\Omega$, and $B$ is a differential operator. In this paper we restrict our consideration to compact high-order approximations with Sommerfeld-like boundary conditions.

Improved resolution is achieved through fourth-order compact approximations (see, e.g., \cite{Lele92, sn}) and a recently developed sixth-order compact finite-difference scheme \cite{tggt}. The sixth-order scheme assumes a uniform grid in all three directions, while the fourth-order method imposes no such restriction.

The discretization yields a large, block 27-diagonal linear system. This matrix is generally neither positive definite nor Hermitian, causing most iterative methods to either fail to converge or converge impractically slowly. The most promising results for similar problems with constant coefficients and second-order schemes have been obtained using preconditioned Krylov subspace methods \cite{EG}.

This paper generalizes approaches developed for second-order discretizations of the Helmholtz equation \cite{EG, eo, eo1, gkl, yg2} to the case of compact fourth- and sixth-order approximation schemes with absorbing boundary conditions, using our novel EigT- and PFFT-type preconditioners.

\subsection*{Key Contributions and Novelty}
The pivotal element for rapid convergence is the choice of preconditioning matrix. Previous work \cite{EG, eo, gkl, yg2} used low-order preconditioning matrices that replaced radiation boundary conditions with Dirichlet or Neumann conditions on the domain's side faces. Systems with such matrices can be efficiently inverted using a combination of FFT and separation of variables in $O(N^3\log{N})$ operations. The attempt to improve the convergence of the method by applying a matching high-order approximation scheme \cite{yg2} with constant coefficient in the preconditioner but still with Dirichlet boundary conditions yields only marginal acceleration in convergence.

This paper demonstrates the critical importance of consistent boundary condition representation in the preconditioner for high-order schemes with variable coefficients, a phenomenon also noted in \cite{mp}. Our key innovation is to include the low-order approximation of the local boundary condition operator $B$ in the preconditioner. The resulting preconditioning matrix is inverted at each Krylov subspace step by a direct EigT or PFFT algorithm, extending methods proposed in \cite{hrt, TW} for higher-order schemes with non-constant coefficients. The outer iterations are performed using the Generalized Minimal Residual (GMRES) method \cite{s}.

The principal advantage of our approach is the consistent use of absorbing boundary conditions in both the high-resolution scheme and the lower-order preconditioner. The inherent parallelism of the low-order preconditioner ensures computational efficiency. The PFFT transform and inverse transform steps can be parallelized across subdomains separated by horizontal ($z=\text{const}$) planes, while the solution of independent tridiagonal systems can be parallelized by dividing the domain with vertical ($zx$- and $zy$-) planes. We plan to explore combinations of OpenMP, MPI, and CUDA implementations of these methods in future publications.

Numerical experiments on test problems demonstrate both the high accuracy of the high-order compact schemes and the computational efficiency of our preconditioned parallel iterative method.

Preliminary results of this work were presented at the 25th International Linear Algebra Society Conference (ILAS 2023) in Madrid, Spain, and at the 18th Copper Mountain Conference on Iterative Methods in Colorado, USA, in 2024.

\subsection*{Paper Organization}
The rest of this paper is organized as follows. Section 2 presents the fourth- and sixth-order compact 3D approximation methods. The convergence analysis, beginning with a motivating 1D model problem, is discussed in Section 3. Section 4 details the development of the parallel direct second-order EigT and PFFT preconditioners for these compact schemes. Finally, Section 5 demonstrates the effectiveness of the proposed algorithms on a series of test problems.

\section{Discretization}

\subsection{Statement of the Forward Problem}
In this section, we present the discretization of the 3D Helmholtz equation (\ref{problem}) with Sommerfeld-like boundary conditions on the computational domain's boundary:
\begin{eqnarray}
\frac{\partial u}{\partial n} - i k_{0} u = 0, \quad \text{on } \partial \Omega, \label{Sombc} \\
k_{0} = k(x,y,z)|_{(x,y,z) \in \partial \Omega}. \label{background}
\end{eqnarray}

In our numerical algorithm, we consider this problem on a finite rectangular computational domain $\Omega = [L_x^l, L_x^r] \times [L_y^l, L_y^r] \times [L_z^l, L_z^r]$ and impose the local boundary conditions (\ref{Sombc}) on the boundary $\partial \Omega$. In this paper, we focus on $k_{0} = \text{const}$, but the extension to a layered medium $k_{0} = k_{0}(z)$ is straightforward and will be considered in our future publications.

To develop fourth- and sixth-order compact approximation schemes for the solution of the 3D Helmholtz equation (\ref{problem}) with the boundary conditions (\ref{Sombc}), we consider a grid:
\begin{align*}
\Omega_h &= \{ (x_i, y_j, z_l) \mid x_i = L_x^l + (i-1) h_x,\ y_j = L_y^l + (j-1) h_y,\ \\
z_l &= L_z^l + (l-1) h_z, i = 1, \dots, N_x,\ j = 1, \dots, N_y,\ l = 1, \dots, N_z \},
\end{align*}
where $h_\alpha = (L_{\alpha}^r - L_{\alpha}^l) / (N_\alpha - 1)$ represents the grid step for each spatial direction $\alpha = x, y, z$. We employ the following standard central difference approximations for the first- and second-order derivatives at the grid point $(i,j,l)$:
\begin{eqnarray}
\delta_{x} u_{i,j,l} = \frac{u_{i+1,j,l} - u_{i-1,j,l}}{2h_x}, \quad \delta_x^2 u_{i,j,l} = \frac{u_{i+1,j,l} - 2u_{i,j,l} + u_{i-1,j,l}}{h_x^2}, \nonumber
\end{eqnarray}
where $u_{i,j,l} = u(x_i, y_j, z_l)$. Analogous definitions hold for the difference operators $\delta_{y}$, $\delta_{z}$, $\delta_{y}^2$, and $\delta_{z}^2$, which will be used in the following sections.

\subsection{A Fourth-Order Approximation Compact Scheme}

This subsection presents a standard fourth-order finite difference compact approximation for equation (\ref{problem}) based on the methodology provided in \cite{sn}. The fourth-order approximation of the second derivative can be expressed as:
\begin{equation}
\left . \frac{\partial^2 u}{\partial \alpha^2} \right |_{i,j,l} = \delta_\alpha^2 u_{i,j,l} - \frac{h_\alpha^2}{12} \left . \frac{\partial^4 u}{\partial \alpha^4} \right |_{i,j,l} + O(h_\alpha^4). \label{fourth_order_second_derivative}
\end{equation}
Differentiating equation (\ref{problem}) twice with respect to $x$ gives the following expression for the fourth derivative of $u$:
\begin{equation}
\frac{\partial^4 u}{\partial x^4} = \frac{\partial^2 f}{\partial x^2} - \frac{\partial^4 u}{\partial x^2 \partial y^2} - \frac{\partial^4 u}{\partial x^2 \partial z^2} - \frac{\partial^2 (k^2 u)}{\partial x^2}. \nonumber
\end{equation}
A second-order approximation of $\frac{\partial^4 u}{\partial x^4}$ can be written as:
\begin{equation}
\left . \frac{\partial^4 u}{\partial x^4} \right |_{i,j,l} = \delta_x^2 f_{i,j,l} - \delta_x^2 \delta_y^2 u_{i,j,l} - \delta_x^2 \delta_z^2 u_{i,j,l} - \delta_x^2 (k^2 u)_{i,j,l} + O(h_x^2). \label{second_order_fourth_derivative}
\end{equation}
Finally, by substituting (\ref{second_order_fourth_derivative}) into (\ref{fourth_order_second_derivative}), the fourth-order approximation of $\frac{\partial^2 u}{\partial x^2}$ becomes:
\begin{equation}
\left . \frac{\partial^2 u}{\partial x^2} \right |_{i,j,l} = \delta_x^2 u_{i,j,l} - \frac{h_x^2}{12} \left( \delta_x^2 f_{i,j,l} - \delta_x^2 \delta_y^2 u_{i,j,l} - \delta_x^2 \delta_z^2 u_{i,j,l} - \delta_x^2 (k^2 u)_{i,j,l} \right) + O(h_x^4). \nonumber
\end{equation}
The expressions for the fourth-order compact approximations of $\frac{\partial^2 u}{\partial y^2}$ and $\frac{\partial^2 u}{\partial z^2}$ have a similar form. Substituting these approximations into (\ref{problem}) and replacing $u_{i,j,l}$ with $U_{i,j,l}$, we obtain:
\begin{align}
&\left( \delta_x^2 + \delta_y^2 + \delta_z^2 \right) U_{i,j,l} + \frac{(h_x^2 + h_y^2)}{12} \delta_x^2 \delta_y^2 U_{i,j,l} + \frac{(h_x^2 + h_z^2)}{12} \delta_x^2 \delta_z^2 U_{i,j,l} \nonumber \\
&+ \frac{(h_y^2 + h_z^2)}{12} \delta_y^2 \delta_z^2 U_{i,j,l} + \left( 1 + \frac{h_x^2}{12} \delta_x^2 + \frac{h_y^2}{12} \delta_y^2 + \frac{h_z^2}{12} \delta_z^2 \right) (k_{i,j,l}^2 U_{i,j,l}) \label{scheme4} \\
&= \left( 1 + \frac{h_x^2}{12} \delta_x^2 + \frac{h_y^2}{12} \delta_y^2 + \frac{h_z^2}{12} \delta_z^2 \right) f_{i,j,l} = f^{(IV)}_{i,j,l}. \nonumber
\end{align}
Here, $U_{i,j,l}$ represents the fourth-order compact finite-difference approximation to $u_{i,j,l}$.

\subsection{A Sixth-Order Approximation Compact Scheme}

The compact sixth-order approximation scheme used in this paper was presented in \cite{tggt} for the three-dimensional Helmholtz equation with Dirichlet boundary conditions. The scheme requires uniform grid spacing, so we assume $h = h_x = h_y = h_z$. Using appropriate derivatives of (\ref{problem}), similarly to the previous subsection, the method can be written as:
\begin{align}
&( \delta_x^2 + \delta_y^2 + \delta_z^2 ) \left( 1 + \frac{k_{i,j,l}^2 h^2}{30} \right) U_{i,j,l} + \frac{h^4}{30} \delta_x^2 \delta_y^2 \delta_z^2 U_{i,j,l} + k_{i,j,l}^2 U_{i,j,l} \nonumber \\
&+ \frac{h^2}{6} ( \delta_x^2 \delta_y^2 + \delta_x^2 \delta_z^2 + \delta_y^2 \delta_z^2 ) \left( 1 + \frac{k_{i,j,l}^2 h^2}{15} \right) U_{i,j,l} + \frac{h^2}{20} \Delta_h (k_{i,j,l}^2 U_{i,j,l}) \label{scheme6} \\
&= f_{i,j,l} + \frac{h^2}{12} \nabla^2 f_{i,j,l} + \frac{h^4}{360} \nabla^4 f_{i,j,l} \nonumber \\
&+ \frac{h^4}{90} \left( \frac{\partial^4 f}{\partial x^2 \partial y^2} + \frac{\partial^4 f}{\partial x^2 \partial z^2} + \frac{\partial^4 f}{\partial y^2 \partial z^2} \right)_{i,j,l} = f^{(VI)}_{i,j,l}, \nonumber
\end{align}
where $f^{(VI)}_{i,j,l}$ represents the sixth-order modified right-hand side, and
\begin{align}
&\Delta_h (k_{i,j,l}^2 U_{i,j,l}) = k_{i,j,l}^2 f_{i,j,l} + \left( \nabla^2 (k^2) - k^4 \right)_{i,j,l}+ \nonumber \\
& 2[(k^2)_x]_{i,j,l} \left[ \delta_x \left( 1 + \frac{h^2}{6} ( \delta_y^2 + \delta_z^2 + k^2_{i,j,l}) \right) U_{i,j,l} - \frac{h^2}{6} (f_x)_{i,j,l} \right] +\nonumber \\
& 2[(k^2)_y]_{i,j,l} \left[ \delta_y \left( 1 + \frac{h^2}{6} ( \delta_x^2 + \delta_z^2 + k^2_{i,j,l}) \right) U_{i,j,l} - \frac{h^2}{6} (f_y)_{i,j,l} \right] +\nonumber \\
& 2[(k^2)_z]_{i,j,l} \left[ \delta_z \left( 1 + \frac{h^2}{6} ( \delta_x^2 + \delta_y^2 + k^2_{i,j,l}) \right) U_{i,j,l} - \frac{h^2}{6} (f_z)_{i,j,l} \right]. \nonumber
\end{align}

For the test problem with exact solutions satisfying boundary conditions (\ref{Sombc}), we used fourth and sixth-order approximations developed using an approach similar to the method described in \cite{tggt, yg2} with a three-point stencil.

But for scattering problems with non-constant $k$, the Sommerfeld-like boundary conditions (\ref{Sombc}) were approximated by placing the computational domain's boundary between the first two and last two grid points in each direction, using the central difference formulas:
\begin{align}
&\frac{U_{1,j,l} - U_{0,j,l}}{h} + i k_0 \frac{U_{1,j,l} + U_{0,j,l}}{2} = 0, \nonumber \\
&\frac{U_{N_x+1,j,l} - U_{N_x,j,l}}{h} - i k_0 \frac{U_{N_x+1,j,l} + U_{N_x,j,l}}{2} = 0, \nonumber \\
&j = 1, \dots, N_y,\ l = 1, \dots, N_z. \nonumber
\end{align}
These approximations allow elimination of the fictitious points $U_{0,j,l}$ and $U_{N_x+1,j,l}$ in the $x$-direction:
\begin{align}
U_{0,j,l} = \gamma_{x} U_{1,j,l}, \quad U_{N_x+1,j,l} = \gamma_{x} U_{N_x,j,l}, \quad j = 1, \dots, N_y,\ l = 1, \dots, N_z, \label{D_bc}
\end{align}
where $\gamma_x = (2 + i k_0 h_x) / (2 - i k_0 h_x)$. Similar expressions apply in the $y$- and $z$-directions with coefficients $\gamma_y$ and $\gamma_z$, respectively.

In some cases, we assume boundary conditions (\ref{Sombc}) are satisfied for any domain with boundaries $L_{\alpha}^l < L_{0,\alpha}^l$ and $L_{\alpha}^r > L_{0,\alpha}^r$, for some $L_{0,\alpha}^l$ and $L_{0,\alpha}^r$, where $\alpha = x, y, z$. Here, higher-order approximations to (\ref{Sombc}) can also be expressed in the form (\ref{D_bc}) with modified coefficients $\gamma_x$, $\gamma_y$, and $\gamma_z$ \cite{tggt}.

Having presented both fourth- and sixth-order compact schemes, we now turn to the solution strategy for the resulting linear systems. This paper uses low-order preconditioners combined with these high-order discretization matrices. The next section presents motivating theoretical results to justify this preconditioning approach, beginning with the simplified 1D case.

\section{Model Problems}

\subsection{One-Dimensional Problem}
In this section, we consider the Dirichlet boundary value problem for the 1D Helmholtz equation:

\begin{equation}
u_{xx} + k^2 u = f \quad \text{on } [0,1], \quad \text{with } u(0) = u(1) = 0. \label{1D_Helm}
\end{equation}
Let $u_i$ and $f_i$ be the values of the solution and the right-hand side at a grid point $x_i = ih$, for $i = 0,1,\dots,N+1$, where $h = 1/(N+1)$. The fourth-order approximation to the second derivative at the $i$-th grid point can be expressed as:
\begin{equation}
u_i'' = \delta_x^2 u_i - \frac{h^2}{12} u_i^{(4)} + O(h^4). \label{u_prime_app}
\end{equation}
We can now use the original equation to eliminate the fourth derivative in (\ref{u_prime_app}). Differentiating (\ref{1D_Helm}) twice yields:
\begin{equation}
u_i^{(4)} = -k^2 u_i'' + f_i''.
\end{equation}
Substituting this relation into (\ref{u_prime_app}) and setting $\tau = kh$, we obtain:
\begin{equation}
\left(1 - \frac{\tau^2}{12} \right) u_i'' = \delta_x^2 u_i - \frac{h^2}{12} f_i'' + O(h^4). \label{u_dprime}
\end{equation}
Let $U_i$, for $i = 0,1,\dots,N+1$, denote the compact fourth-order discrete approximation to the solution of (\ref{1D_Helm}). Then from (\ref{1D_Helm}) and (\ref{u_dprime}), we obtain the system:
\begin{eqnarray}
&& U_{i-1} - 2d U_{i} + U_{i+1} = F_i, \quad i = 1,\dots,N, \label{vect_eq} \\
&& U_0 = 0, \quad U_{N+1} = 0, \quad d = 1 - \frac{\tau^2}{2} + \frac{\tau^4}{24}, \nonumber \\
&& F_i = h^2 \left(1 - \frac{\tau^2}{12} \right) f_i + \frac{h^4}{12} f_i''. \nonumber
\end{eqnarray}
This system can be rewritten in the matrix form $A U = F$, where
\begin{equation}
A = \Lambda - 2d I \label{A_matrix}
\end{equation}
is a symmetric tridiagonal matrix, and
\begin{equation}
\Lambda = \begin{bmatrix}
0 & 1 & & \cdots & 0 \\
1 & 0 & 1 & & \vdots \\
& \ddots & \ddots & \ddots & \\
\vdots & & 1 & 0 & 1 \\
0 & \cdots & & 1 & 0
\end{bmatrix}. \label{lambda_mat_definition_new}
\end{equation}
For the fourth-order scheme defined above, we select a preconditioning matrix $A_p$ based on the second-order central difference approximation:
\begin{equation}
A_p = \Lambda - 2 \left(1 - \frac{\tau^2}{2} \right) I, \label{precond}
\end{equation}
so that the right-preconditioned system is:
\begin{equation}
A A_p^{-1} Y = F, \quad A_p U = Y. \label{Preconditioned_System}
\end{equation}
The above-described fourth-order scheme can be generalized to a scheme of any even order $2r$, $r \ge 2$. In this case, the coefficient $d = d_r$ and the right-hand side $F_i = F_{i,r}$ in equation (\ref{vect_eq}) are given by:
\begin{equation}
d_r = \sum_{j=0}^{r} (-1)^j \frac{\tau^{2j}}{(2j)!}, \label{coef_gen}
\end{equation}
and
\begin{equation}
F_{r,i} = 2h^2 \sum_{j=0}^{r-1} f^{(2j)}_i h^{2j} \sum_{l=0}^{r-j-1} (-1)^l \frac{\tau^{2l}}{[2(l+j+1)]!}, \quad i = 0,1,\dots,N. \label{rhs_gen}
\end{equation}
In what follows, we use the preconditioned GMRES method \cite{s} to find iterated approximate solutions $U^{(n)}$, $n \ge 0$, to the system (\ref{vect_eq}) generated by a compact scheme of order $2r$ with the second-order preconditioner (\ref{precond}). Our main goal is to estimate the rate of convergence to zero of the residuals $r_n = F - A U^{(n)}$. For a theoretical discussion of this approach, see \cite{yg2}. We first recall the definition of an $m$-th order preconditioned system introduced in \cite{yg2}.

Since the $N \times N$ matrices $A$ and $A_p$ share the same set of orthonormal eigenvectors $V$ as the matrix $\Lambda$ given in (\ref{LMB_egv}), the matrix $A A_p^{-1}$ can be represented as $A A_p^{-1} = V^{-1} B V$, where $B$ is the diagonal matrix of eigenvalues of $A A_p^{-1}$ and $V$ is the matrix of the corresponding eigenvectors.

\begin{defn}
A system in the form of (\ref{Preconditioned_System}) is said to be an $m$-th order preconditioned system if the $N \times N$ matrix $A A_p^{-1}$ is diagonalizable and can be expressed as $A A_p^{-1} = V^{-1} (I + h^m D) V$, where $0 < h < 1$ and $D = \text{diag}(d_{11}, \dots, d_{NN})$ is a diagonal matrix such that $\max_{1 \le i \le N} |d_{ii}| < M$ for some positive constant $M$ independent of $h$.
\label{Def}
\end{defn}

Using this definition, known estimates for the convergence rate of the GMRES method \cite{s} applied to the system (\ref{Preconditioned_System}) lead to the following result.
\begin{theorem}
Suppose that (\ref{Preconditioned_System}) is an $m$-th order preconditioned system. Then the iterations $U^{(n)}$ of the GMRES method applied to this system satisfy the convergence estimate:
\begin{equation}
\| r_n \|_2 \le \kappa_2(V) (M h^m)^n \| r_0 \|_2, \label{est}
\end{equation}
where $\kappa_2(V) = \| V^{-1} \|_2 \| V \|_2$ is the condition number of matrix $V$.
\label{theorem_gmres_estimate}
\end{theorem}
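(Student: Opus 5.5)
We use the polynomial characterization of GMRES and evaluate it on one explicit polynomial. The $n$-th GMRES iterate for the right-preconditioned system (\ref{Preconditioned_System}) minimizes the residual over the Krylov space. Its residual therefore has the form $r_n = p_n(AA_p^{-1}) r_0$, where $p_n$ is the minimizing polynomial among all polynomials of degree at most $n$ with $p_n(0)=1$. Here $r_n = F - AU^{(n)} = F - AA_p^{-1}Y^{(n)}$, so the residual of the original system coincides with that of the preconditioned one. Hence, for every admissible $q$ of degree $\le n$ with $q(0)=1$, we have $\|r_n\|_2 \le \|q(AA_p^{-1})\|_2\|r_0\|_2$.

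Next we use the diagonalization from Definition \ref{Def}. Since $AA_p^{-1} = V^{-1}(I+h^mD)V$, any polynomial satisfies $q(AA_p^{-1}) = V^{-1} q(I+h^mD) V$. Consequently
\[
\|q(AA_p^{-1})\|_2 \le \|V^{-1}\|_2\|V\|_2 \max_{1\le i\le N} |q(1+h^m d_{ii})| = \kappa_2(V)\max_i |q(\lambda_i)|,
\]
where $\lambda_i = 1+h^m d_{ii}$ are the eigenvalues. This is the standard GMRES bound for diagonalizable matrices from \cite{s}.

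The key step is the choice $q(z) = (1-z)^n$. It satisfies $q(0)=1$ and has degree $n$. At each eigenvalue, $|q(\lambda_i)| = |h^m d_{ii}|^n \le (Mh^m)^n$, using $\max_i|d_{ii}|<M$. Combining the three displays gives (\ref{est}).

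There is no serious obstacle; the proof rests on three checks. First, $q(0)=1$ must hold for the chosen polynomial, which it does. Second, the residual being bounded must be the true residual of the original system, which follows from the right-preconditioning identity noted above. Third, the factor $\kappa_2(V)$ must come out correctly: the paper writes $V^{-1}BV$ rather than $VBV^{-1}$, but the bound involves $\|V\|_2\|V^{-1}\|_2$ either way. If $V$ is orthonormal, as stated for the model problem, then $\kappa_2(V)=1$.
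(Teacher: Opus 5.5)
Your proof is correct and follows essentially the same route as the paper, which does not prove the statement itself but defers to \cite{yg2}: there the standard bound $\|r_n\|_2 \le \kappa_2(V)\min_{p(0)=1,\ \deg p\le n}\max_i |p(\lambda_i)|\,\|r_0\|_2$ for diagonalizable matrices from \cite{s} is specialized to $p(z)=(1-z)^n$, exactly as you do. Your side remarks are also accurate: the right-preconditioned residual coincides with the residual of the original system, and the $V^{-1}BV$ versus $VBV^{-1}$ convention does not affect the factor $\kappa_2(V)$.
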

\noindent For a proof of Theorem \ref{theorem_gmres_estimate}, see \cite{yg2}.

We now apply the estimate (\ref{est}) to the solution of the system (\ref{Preconditioned_System}) for a discretization scheme of arbitrary even order $2r$.

\begin{theorem}
Let $N \times N$ matrices $A$ and $A_p$ be defined by (\ref{A_matrix}), (\ref{precond}), and (\ref{coef_gen}) with $r \ge 2$. Suppose there exists a constant $\delta_0 > 0$, independent of $h$, such that:
\begin{equation}
\left| \frac{4 \sin^2(i \pi h / 2)}{h^2} - k^2 \right| \ge k^2 \delta_0 \quad \text{for all } i = 1,\dots,N.
\end{equation}
Then (\ref{Preconditioned_System}) is a second-order preconditioned system, and the residuals satisfy the convergence estimate:
\begin{equation}
\| r_n \|_2 \le \left( \frac{2 \tau^2}{\delta_0} \sum_{l=0}^{r-2} \frac{k^{2l}}{4^l [2(l+2)]!} \right)^n \| r_0 \|_2. 
\end{equation}
\label{last_1D_theorem}
\end{theorem}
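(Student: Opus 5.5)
The plan is to diagonalize everything explicitly in the sine basis and then apply Theorem \ref{theorem_gmres_estimate} with $\kappa_2(V)=1$. The matrix $\Lambda$ in (\ref{lambda_mat_definition_new}) has orthonormal eigenvectors $v_i$ with components $\sqrt{2h}\,\sin(ij\pi h)$, $j=1,\dots,N$, and eigenvalues $2\cos(i\pi h)$, $i=1,\dots,N$. Since $A=\Lambda-2d_rI$ and $A_p=\Lambda-2(1-\tau^2/2)I$ are polynomials in $\Lambda$, they share these eigenvectors. Hence $V$ is orthogonal and $\kappa_2(V)=1$. The eigenvalues of $AA_p^{-1}$ are
\[
\mu_i=\frac{2\cos(i\pi h)-2d_r}{2\cos(i\pi h)-2+\tau^2}
=1+\frac{2(1-\tau^2/2)-2d_r}{2\cos(i\pi h)-2+\tau^2}.
\]
So it remains to bound the fraction uniformly in $i$ by $Mh^2$.

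For the denominator, I will use $2\cos\theta-2=-4\sin^2(\theta/2)$ and $\tau=kh$ to write
\[
2\cos(i\pi h)-2+\tau^2=-h^2\left(\frac{4\sin^2(i\pi h/2)}{h^2}-k^2\right).
\]
By the hypothesis of the theorem, its modulus is at least $h^2k^2\delta_0=\tau^2\delta_0$. This is exactly where the non-resonance assumption enters: it keeps the preconditioner's eigenvalues away from zero at the scale $\tau^2$.

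For the numerator, (\ref{coef_gen}) gives $1-\tau^2/2-d_r=-\sum_{j=2}^{r}(-1)^j\tau^{2j}/(2j)!$. Re-indexing with $j=l+2$ and using the triangle inequality,
\[
\bigl|2(1-\tau^2/2)-2d_r\bigr|\le 2\tau^4\sum_{l=0}^{r-2}\frac{\tau^{2l}}{[2(l+2)]!}.
\]
Dividing the two bounds gives
\[
|\mu_i-1|\le\frac{2\tau^2}{\delta_0}\sum_{l=0}^{r-2}\frac{k^{2l}h^{2l}}{[2(l+2)]!}.
\]
Since $N\ge1$ we have $h=1/(N+1)\le 1/2$, so $h^{2l}\le 4^{-l}$. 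This yields
\[
|\mu_i-1|\le h^2M,\qquad M=\frac{2k^2}{\delta_0}\sum_{l=0}^{r-2}\frac{k^{2l}}{4^l[2(l+2)]!},
\]
where $M$ is independent of $h$. Thus $AA_p^{-1}=V^{-1}(I+h^2D)V$ with $\max_i|d_{ii}|\le M$, so the system is second-order preconditioned in the sense of Definition \ref{Def}. Theorem \ref{theorem_gmres_estimate} with $\kappa_2(V)=1$ and $Mh^2$ equal to the displayed constant then gives the stated residual estimate.

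The only delicate point is obtaining the exact form of the constant. This means checking that the factor $\tau^2$ survives: one $\tau^2$ cancels against the $\tau^2\delta_0$ in the denominator, leaving $\tau^4/\tau^2$. It also means replacing $h^{2l}$ by $4^{-l}$ rather than by $1$. A further technicality is that Definition \ref{Def} asks for strict inequality $<M$. This is handled by enlarging $M$ slightly, or by noting that the GMRES bound only uses $\max_i|d_{ii}|\le M$.
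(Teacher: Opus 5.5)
Your proof is correct and follows essentially the same route as the paper. You diagonalize $A$ and $A_p$ in the orthonormal sine basis (so $\kappa_2(V)=1$), write $\lambda_i(AA_p^{-1})-1$ as the tail $2\sum_{j=2}^{r}(-1)^{j-1}\tau^{2j}/(2j)!$ divided by $-h^2\bigl(4\sin^2(i\pi h/2)/h^2-k^2\bigr)$, bound it using the non-resonance hypothesis together with $h\le 1/2$, and then invoke Theorem~\ref{theorem_gmres_estimate}. Your handling of the strict inequality in Definition~\ref{Def} (the paper simply asserts $|d_{ii}|<M$) is a small gain in care, and it is correct, since the GMRES bound only needs $\max_i|d_{ii}|\le M$.
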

\begin{proof}
It is well-known (see, e.g., \cite{eo1, KressNA}) that the eigenvalues of matrix $\Lambda$ are $\lambda_i(\Lambda) = 2 \cos(\pi h i)$ for $i = 1,\dots,N$, and its orthonormal eigenvectors $V_i$, $i = 1,\dots,N$, are given by:
\begin{equation}
(V_i)_l = \sqrt{2h} \sin(\pi h i l), \quad l = 1,\dots,N. \label{LMB_egv}
\end{equation}
The matrix $V$ formed by these eigenvectors is unitary, so $\| V^{-1} \|_2 = \| V \|_2 = 1$.

From (\ref{A_matrix}), the eigenvalues of matrix $A$ are:
\begin{eqnarray}
\lambda_i(A) &=& \lambda_i(\Lambda) - 2d_r = -4 \sin^2 \left( \frac{\pi h i}{2} \right) + 2 \sum_{j=1}^{r} (-1)^{j-1} \frac{\tau^{2j}}{(2j)!}, \nonumber \\
i &=& 1,\dots,N. \label{EGV_A}
\end{eqnarray}
Similarly, setting $r = 1$ in (\ref{coef_gen}), we find from (\ref{precond}) that:
\begin{eqnarray}
\lambda_i(A_p) = -4 \sin^2 \left( \frac{\pi h i}{2} \right) + \tau^2, \quad i = 1,\dots,N. \nonumber
\end{eqnarray}
Therefore, for $i = 1,\dots,N$, we obtain:
\begin{eqnarray}
\lambda_i(A A_p^{-1}) &=& \frac{\lambda_i(A)}{\lambda_i(A_p)} = 1 + \frac{2 \sum_{j=2}^{r} (-1)^{j-1} \frac{\tau^{2j}}{(2j)!}}{-4 \sin^2(\frac{\pi h i}{2}) + \tau^2}, \nonumber \\
&=& 1 - 2 k^4 h^2 \frac{ \sum_{l=0}^{r-2} (-1)^{l+1} \frac{\tau^{2l}}{(2(l+2))!} }{ \frac{4 \sin^2(\frac{\pi h i}{2})}{h^2} - k^2 }. \label{1DEGV}
\end{eqnarray}
Setting:
\begin{eqnarray}
d_{ii} = -2 k^4 \frac{ \sum_{l=0}^{r-2} (-1)^{l+1} \frac{\tau^{2l}}{(2(l+2))!} }{ \frac{4 \sin^2(\frac{\pi h i}{2})}{h^2} - k^2 }, \quad i = 1,\dots,N, \label{Dii}
\end{eqnarray}
and using the assumptions of Theorem \ref{last_1D_theorem} and the inequality $0 < h < \frac{1}{2}$, we find that:
\begin{eqnarray}
|d_{ii}| < \frac{2 k^2}{\delta_0} \sum_{l=0}^{r-2} \frac{k^{2l}}{4^l (2(l+2))!} := M, \quad i = 1,\dots,N. \label{Dii_new}
\end{eqnarray}
Applying Definition \ref{Def} and formula (\ref{1DEGV}), we conclude that (\ref{Preconditioned_System}) is a second-order preconditioned system. Using Theorem \ref{theorem_gmres_estimate}, we obtain the required estimate for the residuals.
\end{proof}

These estimates demonstrate the main feature of the proposed algorithms: the number of iterations required to achieve a given accuracy decreases as the size of the system increases. This property is unexpected and generally does not hold for basic iterative methods.

For instructions on estimating $\delta_0$ and a proof for the sixth-order compact approximation scheme in one and three-dimensional cases, see \cite{yg2}.

\section{Efficient Preconditioners: EigT and PFFT Methods}

In our prior work \cite{yg2, ggl, gkl}, we developed a family of efficient high-resolution iterative methods combining preconditioned GMRES with a lower-order FFT preconditioner. The preconditioning matrix $A^F_p$ for this approach was derived using a second-order approximation central difference scheme with the absorbing boundary on the sides of the three-dimensional computational domain replaced with the Dirichlet or Neumann conditions. Moreover, the coefficient  $k(x,y,z)$ in the governing equation (\ref{problem}) was replaced with the background coefficient $k_{0}(z)$ (\ref{background}) in the preconditioner. While effective, the mismatch between the original matrix's absorbing boundary conditions and the preconditioner's Dirichlet/Neumann approximations limited convergence rates.

In this paper, we improve the previous approach by including a lower-order approximation of the absorption boundary conditions on all boundaries in the preconditioning matrix. This change requires modification of the transformation steps of the direct FFT preconditioner. The proposed method relies on a solution of a simple one-dimensional eigenvalue problem, which can be precomputed. Then, the transformation step can be implemented in parallel directly using two precomputed matrices of eigenvectors with sizes $N_x$ by $N_x$ and $N_y$ by $N_y$  or using the Partial Fast Fourier Transform (PFFT) approach. The theoretical asymptotic complexity of the direct eigenvector transformation for each two-dimensional slice is $O(N_x^2N_y+N_xN_y^2)$ instead of $O(N_xN_y\log(N_xN_y))$ required by the PFFT.
However, empirical results show comparable runtime for grids with $N_x, N_y < 10^3.$ We present the results of these experiments for different wavelengths and several grid sizes.    

\subsection{Low-Order Approximation Direct Eigenvectors (EigT) Preconditioner}
 
 First, we introduce a non-Hermitian tridiagonal matrix that represents the major building block of the second-order 3D preconditioning matrix:
\begin{equation}
\bar{\Lambda}_{\nu} = \begin{bmatrix}
\gamma_{\nu}& \zeta_{\nu} & & \cdots &0\\
1& 0 &1 &  &\vdots \\
& \ddots & \ddots&\ddots  & \\
\vdots&  & 1&  0&1 \\
0&  \cdots& &  \zeta_{\nu}& \gamma_{\nu} 
\end{bmatrix}_{N_{\nu}\times N_{\nu}}, \gamma_{\nu}, \zeta_{\nu} \in \mathbf{C} \text{ with } \nu = x, y, z,
\label{lambda_mat_definition}
\end{equation}
where $N_{\nu}$ represents the dimension in the $\nu$-direction. In the majority of our experiments, $\zeta_{\nu} =1$ or $2$, corresponding to the second-order approximation to the absorption boundary conditions on staggered and collocated grids, respectively.
Under the assumption that $\bar{\Lambda}_{\nu}$ is diagonalizable, we denote by:
\begin{itemize}
    \item $V_{\nu} = [V_{\nu,1}, \dots, V_{\nu,N_{\nu}}]$ the matrix of eigenvectors, where $V_{\nu,j}$ is the $j$-th eigenvector and $v_{\nu,i,j}$ represents the $i$-th component of $V_{\nu,j}$;
    \item $V_{\nu}^{-1}$ its inverse, where $V^{-1}_{\nu,j}$ is the $j$-th column and $v^{-1}_{\nu,i,j}$ represents the $i$-th component of $V^{-1}_{\nu,j}$; 
    \item $D_{\nu} = \operatorname{diag}(d_{\nu,1}, \dots, d_{\nu,N_{\nu}})$ the diagonal matrix of corresponding eigenvalues.
\end{itemize}

The proposed preconditioning matrix has the Kronecker product structure:
\begin{equation*}
A_p = I_z \otimes \left(\alpha_y\bar{\Lambda}_y\otimes I_x + \alpha_xI_y \otimes \bar{\Lambda}_x\right) + \left(\bar{\Lambda}_z + K_z - 2(\alpha_x+\alpha_y+1)I_z\right) \otimes I_{xy},
\end{equation*}
with the following components:
\begin{itemize}
    \item Grid aspect ratios: $\alpha_d = h_z^2/h_d^2$ for $d \in \{x,y\}$
    \item Depth-dependent coefficient matrix: $  K_z = h_z^2 \diag(k_0^2(z_1),  \dots, k_0^2(z_{N_z})) $
    \item Identity matrices: 
        \begin{itemize}
            \item $I_d$: $N_d \times N_d$ identity ($d \in \{x,y,z\}$)
            \item $I_{xy} = I_y \otimes I_x$: Horizontal subspace identity
        \end{itemize}
\end{itemize}

The preconditioning system $A_p U = Y$ can be expressed in block-tridiagonal form as:
\begin{align}
&(\gamma_{z}I_{xy}+B_{1})U_{1}+\zeta_{z}U_{2}=Y_{1}, \nonumber \\
&U_{l-1}+B_{l}U_{l}+U_{l+1}=Y_{l},\quad l=2,\ldots,N_{z}-1, \label{VecPre_system} \\
&\zeta_{z}U_{N_{z}-1}+(\gamma_{z}I_{xy}+B_{N_{z}})U_{N_{z}}=Y_{N_{z}}, \nonumber
\end{align}
where:
\begin{itemize}
    \item The block operators $B_l$ combine the horizontal components:
    \begin{equation*}
    B_l = \alpha_y \bar{\Lambda}_y \otimes I_x + \alpha_x I_y \otimes \bar{\Lambda}_x + \left(h_z^2 k_0^2(z_l) - 2(\alpha_x + \alpha_y + 1)\right) I_{xy}, \quad l = 1, \dots, N_z.
     \end{equation*}    \item The solution vector $U$ is organized by horizontal slices:
    \begin{equation*}
    U_l = \left(U_{1,1,l}, \dots, U_{N_x,1,l}, \dots, U_{1,N_y,l}, \dots, U_{N_x,N_y,l}\right)^T, \quad l = 1, \dots, N_z.
    \end{equation*}
    \item The right-hand side $Y \in \mathbf{C}^{N_x \times N_y \times N_z}$ is updated at each iteration.
\end{itemize}
The diagonalization of $B_l$ follows directly from the eigenvector transformations:
\begin{align*}
&(V_y^{-1} \otimes V_x^{-1})(\alpha_y \bar{\Lambda}_y \otimes I_x + \alpha_x I_y \otimes \bar{\Lambda}_x)(V_y \otimes V_x) \\
&= \alpha_y (V_y^{-1}\bar{\Lambda}_y V_y) \otimes (V_x^{-1}V_x) + \alpha_x (V_y^{-1}V_y) \otimes (V_x^{-1}\bar{\Lambda}_x V_x) \\
&= \alpha_y D_y \otimes I_x + \alpha_x I_y \otimes D_x.
\end{align*}
The preconditioned system takes its fully decoupled form after the eigenvector transformation:
\begin{align}
&(\gamma_{z}I_{xy}+\bar{B}_{1})\bar{U}_{1}+\zeta_{z}\bar{U}_{2}=\bar{Y}_{1}, \nonumber \\
&\bar{U}_{l-1}+\bar{B}_{l}\bar{U}_{l}+\bar{U}_{l+1}=\bar{Y}_{l},\quad l=2,\ldots,N_{z}-1, \\
&\zeta_{z}\bar{U}_{N_{z}-1}+(\gamma_{z}I_{xy}+\bar{B}_{N_{z}})\bar{U}_{N_{z}}=\bar{Y}_{N_{z}}, \nonumber
\end{align}
with the transformed variables and operators defined by:
     \begin{align} \label{DIAGprec}
    &\bar{B}_l = \underbrace{\alpha_y D_y \otimes I_x + \alpha_x I_y \otimes D_x}_{\text{Decoupled horizontal terms}} 
    + \underbrace{(h_z^2 k_0^2(z_l) - 2(\alpha_x + \alpha_y + 1))I_{xy}}_{\text{Vertical coupling term}}, \\
   & \bar{U}_l = (V_y^{-1} \otimes V_x^{-1}) U_l, \ \bar{Y}_l = (V_y^{-1} \otimes V_x^{-1}) Y_l, \ \ l = 1,\dots,N_z.  \nonumber
    \end{align}
 
The system further decomposes into $N_x \times N_y$ independent tridiagonal systems:
\begin{align}
&(\gamma_{z}+\bar{b}_{i,j,1})\bar{u}_{i,j,1}+\zeta_{z}\bar{u}_{i,j,2}=\bar{y}_{i,j,1}, \nonumber \\
&\bar{u}_{i,j,l-1}+\bar{b}_{i,j,l}\bar{u}_{i,j,l}+\bar{u}_{i,j,l+1}=\bar{y}_{i,j,l}, \quad 
\begin{cases}l=2,\ldots,N_{z}-1\\i=1,\ldots,N_{x}\\j=1,\ldots,N_{y}\end{cases} \label{Indep_precond_system} \\
&\zeta_{z}\bar{u}_{i,j,N_{z}-1}+(\gamma_{z}+\bar{b}_{i,j,N_{z}})\bar{u}_{i,j,N_{z}}=\bar{y}_{i,j,N_{z}}. \nonumber
\end{align}
where each diagonal entry combines the eigenvalues $d_{x,i}$ and $d_{y,j}$ with the vertical coupling term:
\begin{equation}
\bar{b}_{i,j,l} = d_{x,i} + d_{y,j} + h_z^2 k_0^2(z_l) - 2(\alpha_x + \alpha_y + 1).
\end{equation}
The solution of the system $A_pU=Y$ is computed via the EigT algorithm, which consists of four steps:
\begin{align}
&\text{1. \textbf{Precomputation:} Compute } D_{\nu}, V_{\nu}, V_{\nu}^{-1} \text{ for } \nu = x, y; \nonumber \\
&\text{2. \textbf{Forward Transform:} Compute } \bar{Y}_l = (V_y^{-1} \otimes V_x^{-1}) Y_l, \nonumber \\
&\qquad l=1,\dots,N_z; \nonumber \\
&\text{3. \textbf{Vertical Solve:} Solve (\ref{Indep_precond_system}) in parallel for all } (i,j); \label{EIG_SOLVER} \\
&\text{4. \textbf{Inverse Transform:} Compute } U_l = (V_y \otimes V_x) \bar{U}_l, \nonumber \\
&\qquad l=1,\dots,N_z. \nonumber
\end{align}
Step 1 in the solution can be precomputed and usually takes less than $5\%$ of total computational time required by Steps 2-4. If the number of iterations in the preconditioned iterative method is greater than ten,
this computational cost becomes negligible. 

\subsection{Partial FFT (PFFT) Preconditioning Approach}

The PFFT method utilizes a modified preconditioning matrix $A_p^F$ from our previous work \cite{hrt}:
\begin{equation}
A_p^F = I_z \otimes (\alpha_y \Lambda_y \otimes I_x + \alpha_x I_y \otimes \Lambda_x)  + (\bar{\Lambda}_z + K_z - 2(\alpha_x+\alpha_y+1)I_z) \otimes I_{xy}, \label{TFprec}
\end{equation}
where $\Lambda_{\nu}$ (for $\nu = x, y$) is the matrix $\Lambda$ defined in (\ref{lambda_mat_definition_new}) with size $N_{\nu} \times N_{\nu}$.

The solution of the system $A_p U = Y$ can be implemented efficiently using 
the following three-step PFFT algorithm, which uses the FFT-based solver $A_p^F$:
\begin{align}
&\text{1. \textbf{Initial FFT step:} Solve } A_p^F \Theta = Y \text{ using 2D FFT} \nonumber \\
&\quad \text{in } O(N_xN_yN_z \log (N_xN_y)) \text{ operations}. \nonumber \\
&\text{2. \textbf{Boundary Correction:} Use (\ref{VecPre_system}-\ref{Indep_precond_system}) to solve} \nonumber \\
&\quad \text{for } W = U - \Theta \text{ at boundary and near-boundary points only,} \nonumber \\
&\quad \text{requiring } O((N_x^2 + N_x N_y + N_y^2)N_z) \text{ operations:} \nonumber \\
&\quad A_p W = Y - A_p \Theta = (A^F_p - A_p)\Theta. \label{PFFT_SOLVER} \\
&\text{3. \textbf{FFT Finalization:} Solve } A^F_p U = Y + (A^F_p - A_p)(\Theta + W). \nonumber
\end{align}
\noindent
Note that in Step 2, we do not solve for the full vector $W$. Because the matrix $(A_p^F - A_p)$ is non-zero only at entries corresponding to the boundary and adjacent interior points (see its structure below in (\ref{PFFT_INV})), the right-hand side $(A_p^F - A_p)\Theta$ is sparse, and we only require---and thus only compute---the values of $W$ at those locations. The full solution $U$ is then obtained in Step 3 by applying the fast $O(N_xN_yN_z \log (N_xN_y))$ FFT-based solver $A_p^F$ to a corrected right-hand side. We also use the inverse transform matrix directly on the last stage of the first step to find the solution only at the boundary and near-boundary points (without inverse FFT) and utilize the transformed $Y$ from the first step in the forward FFT stage of the third step of the algorithm, along with direct forward transformation of the sparse vector $(A^F_p - A_p)(\Theta + W)$. This optimization does not affect the theoretical complexity estimate but reduces the number of FFT transforms of the 3D vector from four to two.

The $O((N_x^2 + N_x N_y + N_y^2)N_z)$ complexity of the second step arises from the sparse structure of the residual vectors $Y_l = (A_p^F - A_p)\Theta_l$ and $(A^F_p - A_p)W$. The vector $Y_l$ decomposes into $Y_l = \Theta_l^x + \Theta_l^y$, where

\begin{align*}
&\Theta^x_l  = \begin{cases}
-\gamma_x \Theta_{i,j,l}, & \text{at } i=1,N_x,\ j=1, \dots , N_y,\ l=1, \dots , N_z, \\
(1-\zeta_x) \Theta_{i,j,l}, & \text{at } i=2,N_x-1,\ j=1, \dots , N_y,\ l=1, \dots , N_z, \\
0, & \text{otherwise},
\end{cases}  \\
&\Theta^y_l  = \begin{cases}
-\gamma_y \Theta_{i,j,l}, & \text{at } j=1,N_y,\ i=1, \dots , N_x,\ l=1, \dots , N_z, \\
(1-\zeta_y) \Theta_{i,j,l}, & \text{at } j=2,N_y-1,\ i=1, \dots , N_x,\ l=1, \dots , N_z, \\
0, & \text{otherwise}.
\end{cases}
\end{align*}
Then the transformation of the right-hand side $\bar{Y}_l = (V_y^{-1} \otimes V_x^{-1}) Y_l$ 
(see Step 2 in the EigT procedure, Eq.~\ref{EIG_SOLVER}) can be computed as
 \begin{align}
&\text{Step 1: } S_{x,m} = V_x^{-1} Y_{m,l}, \quad m=1,2,N_y-1,N_y;  \nonumber\\
&\text{Step 2: } \bar{Y}_{j,l} = \sum_{m=1,2,N_y-1,N_y} v^{-1}_{y,j,m} S_{x,m}  +  \sum_{\nu=1,2,N_x-1,N_x} V_{x,\nu}^{-1} \sum_{m=3}^{N_y-2} (v^{-1}_{y,j,m} Y_{\nu,m,l}), \nonumber \\
&\quad\quad\quad\quad j=1, \dots,N_y,\ l=1, \dots,N_z.  \nonumber
\end{align}
The transformation of $Y_l$ can be calculated independently for every $l=1, \dots,N_z$ and requires $O(N_x^2 + N_x N_y)$ operations for each $l$. We note that if $\zeta_{\nu}=1$ ($\nu=x,y$), which corresponds to the two-point approximation of the absorption boundary condition on the staggered grids, then the second step becomes
 \begin{align}
&\text{Step 2*: } \bar{Y}_{j,l} = \sum_{m=1,N_y} v^{-1}_{y,j,m} S_{x,m}  +  \sum_{\nu=1,N_x} V_{x,\nu}^{-1} \sum_{m=2}^{N_y-1} (v^{-1}_{y,j,m} Y_{\nu,m,l}), \nonumber \\
&\quad\quad\quad\quad j=1, \dots,N_y,\ l=1, \dots,N_z.  \nonumber
\end{align}
Similarly, we can develop the formula for the inverse transform step in (\ref{EIG_SOLVER}). The key now is the sparsity of the vector $(A^F_p - A_p)W$ in the third step of (\ref{PFFT_SOLVER}), i.e., we only need to find $W$ at the boundary points. This radically reduces the number of necessary 1D eigenvector transformations in the implementation of $W_l = (V_y \otimes V_x) \bar{W}_l$ for $l=1, \dots,N_z$. This inverse transformation can be computed in three steps::
 \begin{align}
&\text{Step 1: } W_{j,l} = V_x \sum_{m=1}^{N_y} (v_{y,j,m} \bar{W}_{m,l}), \quad j=1, N_y;  \nonumber\\
&\text{Step 2: } s_{i,j} = \sum_{m=1}^{N_x} v_{x,i,m} \bar{W}_{m,j,l}, \quad i=1, N_x;\ j=1, \dots,N_y;  \label{PFFT_INV} \\
&\text{Step 3: } W_{i,j,l} = \sum_{m=1}^{N_y} (s_{i,m} v_{y,j,m}), \quad i=1, N_x;\ j=2, \dots,N_y-1,  \nonumber \\
&\quad\quad\quad\quad l=1, \dots,N_z.  \nonumber
\end{align}
The inverse transformation of $\bar{W}$ also requires $O(N_x N_y + N_y^2)$ operations at each horizontal level $l=1, \dots,N_z$. We also note that the values of $\Theta$ are required only at the boundary. Therefore, we use the procedure described in (\ref{PFFT_INV}) to calculate the required values of $\Theta$ in the first step of the PFFT procedure (\ref{PFFT_SOLVER}) using the analytic eigenvector formulas given in (\ref{LMB_egv}).   

In this paper, we construct an efficient iterative solver based on preconditioned GMRES \cite{s}, leveraging the preconditioners developed earlier. While general convergence results for GMRES can be found in \cite{s}, our specific convergence estimates for the lower-order preconditioner in high-order approximation schemes are derived for model cases in Theorem \ref{last_1D_theorem} and extended in our prior work \cite{yg2}.

The key to our iterative method's rapid convergence lies in a modified eigenvector transform approach for solving the preconditioning system. This modification enables the inclusion of low-accuracy approximations of absorption boundary conditions in the preconditioning matrix $A_p$ while maintaining computational efficiency.

The proposed algorithms are ``embarrassingly parallel'' as each horizontal slice can be processed independently. Their implementation in OpenMP, MPI, or hybrid environments follows the same paradigm as the FFT-GMRES combination methods detailed in \cite{ggl}. In the following section, we present numerical experiments demonstrating the efficacy of the developed preconditioners for general subsurface scattering problems.

\section{Numerical Results}

This section presents numerical experiments demonstrating the performance of the proposed EigT- and PFFT-preconditioned GMRES solvers. All algorithms were implemented in MATLAB and executed on a machine equipped with an Apple M3 Max processor (16-core, 4.05 GHz max) and 128 GB of RAM. To ensure a fair comparison of solver performance, all computations were restricted to a single CPU core, disabling MATLAB's inherent multithreading capabilities. The right-preconditioned restarted GMRES(20) method, available via MATLAB's built-in \texttt{gmres} function, was used as the iterative solver.

\subsection{3D Test Problem}

We first investigate the performance of the developed preconditioners using a 3D test problem similar to the 2D test presented in \cite{gkl}. The problem consists of solving the 3D Helmholtz equation with a constant wavenumber $k_0$ and a right-hand side $f$ constructed to yield a known analytic solution $u$ that satisfies the absorption boundary conditions.

The source function $f$ was selected such that the true solution is given by the separable function:
\[
u(x,y,z) = \phi(x)\phi(y)\phi(z),
\]
with $(x,y,z) \in \Omega = [0,1] \times [0,1] \times [0,1]$, and where
\[
\phi(x)=\exp(ik_{0}(x-a))+\exp(-ik_{0}(x-b))-2,
\]
with $a=0$, $b=1$, and $k_{0}^{2}=439.2$. This wavenumber corresponds to the propagation of an electromagnetic wave in non-attenuating media at a frequency of 1 GHz.

In our experiments, we use the following error and residual measures for the initial approximation $U^0$, the computed numerical solution $U$, and the analytic solution $u$:
\begin{itemize}
    \item \textbf{rel-res} (relative residual): $\|AU - F\|_2 / \|AU^0 - F\|_2$,
    \item \textbf{relmax-res} (relative maximum residual): $\|AU - F\|_{\infty} / \|AU^0 - F\|_{\infty}$,
    \item \textbf{$L_2$-err} (relative $L_2$ error): $\|u - U\|_2 / \|u\|_2$,
    \item \textbf{max-err} (maximum absolute error): $\|u - U\|_{\infty}$,
    \item \textbf{relmax-err} (relative maximum error): $\|u - U\|_{\infty} / \|u\|_{\infty}$.
\end{itemize}

The iterative GMRES process was terminated when the relative residual (\textbf{rel-res}) reached $10^{-10}$. In the results presented below, $N_0$ denotes the iteration count at which the 
convergence criterion was first satisfied, and $T_P$ denotes the corresponding 
CPU time in seconds.

\subsubsection{Transformation Methods Comparison}

In the first two tests, we compare the direct transformation approach (EigT) from (\ref{EIG_SOLVER}) and the PFFT method from (\ref{PFFT_SOLVER}) for solving the second-order central difference approximation of the 3D Helmholtz equation with a constant wavenumber $k_0^2 = 439.2$. The primary goal is to evaluate the computational efficiency of these two direct solvers, which are subsequently used as preconditioners.

We measure the computational time for the key components of each method:
\begin{itemize}
    \item The 2D sine transform (applied to all $N_z$ slices), used in Steps 1 and 3 of the PFFT method and implemented using the standard FFTW package.
    \item The full eigenvector transformation, used in Steps 2 and 4 of the EigT method (\ref{EIG_SOLVER}).
    \item The boundary correction steps (Steps 1 and 2 of the PFFT method, \ref{PFFT_SOLVER}), which constitute the first transformation step of the PFFT algorithm.
\end{itemize}
All transforms are applied to the right-hand side vector of the test problem on a sequence of increasingly finer grids. The eigenvector transform has an analytical complexity of $O(N_x N_y N_z (N_x + N_y))$ floating-point operations (FLOPs), while the FFT-based approach has an asymptotic complexity of $O(N_x N_y N_z (\log N_x + \log N_y))$ FLOPs for transforming $N_z$ 2D horizontal slices.

Table~\ref{tab:transform_times} presents the timing results. The first column lists the grid size. The second column shows the time required to solve the two auxiliary 1D eigenvalue problems (a one-time setup cost for the EigT method). The last three columns display the total computational time for the sine transform, the full EigT transform, and the PFFT transform (using one layer of boundary points), respectively.

\begin{table}[h!]
\centering
\begin{tabular}{|c|c|c|c|c|}
\hline
\textbf{Grid} & \textbf{1D Eig.Solve(s)} & \textbf{Sine Transform(s)} & \textbf{EigT(s)} & \textbf{PFFT(s)} \\ \hline
$100^3$ & 0.02 & 0.077 & 0.05 & 0.14   \\ \hline
$200^3$ & 0.085 & 0.83 & 0.59 & 1.12  \\ \hline
$400^3$ & 0.59 & 5.3 & 9.2 & 7.7  \\ \hline
$600^3$ & 1.53 & 18.2 & 45.5 & 28.2  \\ \hline
$800^3$ & 3.03 & 64.8 & 144.9 & 100.1  \\ \hline
$127^3$ & 0.03 & 0.064 & 0.11 & 0.16  \\ \hline
$255^3$ & 0.16 & 0.55 & 1.58 & 1.11  \\ \hline
$511^3$ & 1.03 & 4.68 & 24.2 & 11.8  \\ \hline
$767^3$ & 2.75 & 18.4 & 120.5 & 44.9  \\ \hline
\end{tabular}
\caption{Comparison of computational time (in seconds) for different transformation methods.}
\label{tab:transform_times}
\end{table}

Our experiments reveal two distinct performance regimes:
\begin{itemize}
    \item For smaller grids ($N_x, N_y < 255$) or those with grid sizes that are non-optimal for FFTs (e.g., containing large prime factors), the direct eigenvector transformation (EigT) is more efficient than the PFFT approach. This is due to the higher constant factors and precomputation overhead associated with the FFTW library.
    \item For larger grids ($N_x, N_y > 256$) with sizes that are powers of two or highly composite (containing only small prime factors), the PFFT method becomes the superior choice. Its theoretical $O(N \log N)$ asymptotic complexity advantage is realized, clearly outperforming the $O(N^2)$ scaling of the EigT method.
\end{itemize}
The transition between these regimes occurs near the $255^3$ to $400^3$ grid size, where the FFT's algorithmic efficiency begins to dominate the computational cost.

\subsubsection{Direct Solution by the Low-Order Preconditioners}

Table~\ref{tab:direct_solve} compares the performance of the EigT and PFFT methods for directly solving the second-order finite difference approximation of the 3D Helmholtz equation. Results are shown for two boundary condition approximations: two-point on staggered grids (\textbf{EigT2}/\textbf{PFFT2}) and three-point on collocated grids (\textbf{EigT3}/\textbf{PFFT3}). All computational times are reported in seconds.

\begin{table}[h!]
\centering
\small 
\begin{tabular}{|c|c|c|c|c|c|c|}
\hline
\textbf{Grid} & \textbf{max-err} & \textbf{$L_2$-err} & \textbf{EigT2} & \textbf{PFFT2} & \textbf{EigT3} & \textbf{PFFT3} \\ \hline
$100^3$ & $7.74 \times 10^{-1}$ & $1.60 \times 10^{-2}$ & 0.10 & 0.27 & 0.11 & 0.34  \\ \hline
$200^3$ & $1.93 \times 10^{-1}$ & $4.00 \times 10^{-3}$ & 1.38 & 2.60 & 1.63 & 2.86 \\ \hline
$400^3$ & $4.80 \times 10^{-2}$ & $9.90 \times 10^{-4}$ & 20.41 & 19.79 & 24.63 & 25.73\\ \hline
$600^3$ & $2.10 \times 10^{-2}$ & $4.40 \times 10^{-4}$ & 95.08 & 61.78 & 118.00 & 86.26\\ \hline
$800^3$ & $1.20 \times 10^{-2}$ & $2.50 \times 10^{-4}$ & 293.29 & 234.06 & 367.00 & 290.00\\ \hline
$127^3$ & $4.79 \times 10^{-1}$ & $1.00 \times 10^{-2}$ & 0.24 & 0.41 & 0.29 & 0.40\\ \hline
$255^3$ & $1.19 \times 10^{-1}$ & $2.00 \times 10^{-3}$ & 3.35 & 2.83 & 4.12 & 3.70\\ \hline
$511^3$ & $3.00 \times 10^{-2}$ & $6.10 \times 10^{-4}$ & 50.40 & 22.90 & 61.80 & 35.80 \\ \hline
$767^3$ & $1.30 \times 10^{-2}$ & $2.70 \times 10^{-4}$ & 247.80 & 109.80 & 306.50 & 172.00 \\ \hline
\end{tabular}
\caption{Comparison of the direct solution time (seconds) and solution errors for the second-order preconditioning system.}
\label{tab:direct_solve}
\end{table}

For reference, the standard \textsc{Matlab} backslash operator required 41.8~s, 148.8~s, and 448.7~s for $50^3$, $60^3$, and $70^3$ grids, respectively, while failing to solve a $100^3$ system within 30 minutes. In stark contrast, both of our transformation methods solve these larger systems in fractions of a second.

The results confirm the following:
\begin{itemize}
    \item The EigT method is more efficient for smaller grids or those with sizes non-optimal for FFTs (typically below $255^3$).
    \item The PFFT method outperforms EigT for larger grids (typically $\geq 255^3$) with sizes that are optimal for FFTs, realizing its theoretical $O(N \log N)$ complexity advantage.
\end{itemize}

Columns 2--3 report the maximum absolute error  \textbf{max-err} and the relative $L_2$ error (\textbf{$L_2$-err}) of the finite-difference solution. The consistent reduction in error as the grid is refined confirms that our methods maintain the second-order accuracy of the underlying scheme. We observe that even on an $800^3$ grid, the maximum error remains on the order of $10^{-2}$.

In the next section, we consider the implementation of high-order compact schemes using the developed EigT and PFFT preconditioners within an iterative solver.

\subsubsection{Preconditioned GMRES Solution for Higher-Order Schemes}

This section presents numerical experiments for solving the fourth- and sixth-order compact schemes (\ref{scheme4}, \ref{scheme6}) on collocated grids, preconditioned with the proposed EigT3 and PFFT3 methods. For comparison, we also solve the same high-order problems using preconditioners based on second-, fourth-, and sixth-order schemes with zero Dirichlet boundary conditions, as analyzed in \cite{ggl}. In the results presented below, $N_0$ denotes the iteration count at which the convergence criterion was first satisfied, and $T_P$ denotes the corresponding total processor time in seconds. These reference preconditioners are denoted FFT2, FFT4, and FFT6. The results are compared in Tables~\ref{tab:fourth_order_results} and~\ref{tab:sixth_order_results}.

\begin{table}[h!]
\centering
\begin{tabular}{|c|c|c|c|c|c|c|c|c|c|c|}
\hline
\textbf{Grid} & \textbf{max-err} & \textbf{$L_2$-err} & \multicolumn{2}{c|}{\textbf{EigT3}} & \multicolumn{2}{c|}{\textbf{PFFT3}} & \multicolumn{2}{c|}{\textbf{FFT2}} & \multicolumn{2}{c|}{\textbf{FFT4}} \\
\cline{4-11}
 & & & $\mathbf{N_0}$ & \textbf{TP} & $\mathbf{N_0}$ & \textbf{TP} & $\mathbf{N_0}$ & \textbf{TP} & $\mathbf{N_0}$ & \textbf{TP} \\
\hline
 $50^3$ & 0.02 & 5.25e-04 & 8 & 0.31 & 8 & 0.48 & 46 & 2.15 & 37 & 1.41 \\
\hline
$100^3$ & 1.25e-03 & 3.12e-05 & 5 & 1.82 & 5 & 2.81 & 45 & 19.3 & 42 & 15.3 \\
\hline
$200^3$ & 7.68e-05 & 1.91e-06 & 4 & 17.6 & 4 & 24.4 & 57 & 246 & 54 & 234 \\
\hline
$400^3$ & 4.75e-06 & 1.18e-07 & 3 & 155 & 3 & 144 & 75 & 2588 & 72 & 2538 \\
\hline
$64^3$ & 7.64e-03 & 1.91e-04 & 7 & 0.85 & 7 & 0.60 & 43 & 4.18 & 37 & 2.84 \\
\hline
$127^3$ & 4.78e-04 & 1.19e-05 & 5 & 4.45 & 5 & 5.72 & 50 & 40.9 & 48 & 39.7 \\
\hline
$255^3$ & 2.89e-05 & 7.19e-07 & 4 & 34.2 & 4 & 34.1 & 60 & 515 & 58 & 494 \\
\hline
$511^3$ & 1.78e-06 & 4.42e-08 & 3 & 386 & 3 & 249 & 87 & 4970 & 86 & 5650 \\
\hline
\end{tabular}
\caption{Comparison of the iterative GMRES solution for the fourth-order preconditioning system.}
\label{tab:fourth_order_results}
\end{table}

\begin{table}[h!]
\centering
\begin{tabular}{|c|c|c|c|c|c|c|c|c|c|c|}
\hline
\textbf{Grid} & \textbf{max-err} & \textbf{$L_2$-err} & \multicolumn{2}{c|}{\textbf{EigT3}} & \multicolumn{2}{c|}{\textbf{PFFT3}} & \multicolumn{2}{c|}{\textbf{FFT2}} & \multicolumn{2}{c|}{\textbf{FFT6}} \\
\cline{4-11}
 & & & $\mathbf{N_0}$ & \textbf{TP} & $\mathbf{N_0}$ & \textbf{TP} & $\mathbf{N_0}$ & \textbf{TP} & $\mathbf{N_0}$ & \textbf{TP} \\
\hline
 $50^3$ & 2.57e-04 & 7.06e-06 & 8 & 0.26 & 8 & 0.44 & 43 & 1.89 & 36 & 1.52 \\
\hline
$100^3$ & 3.71e-06 & 9.97e-08 & 5 & 2.06 & 5 & 3.13 & 45 & 21.6 & 42 & 20.7 \\
\hline
$200^3$ & 5.63e-08 & 1.49e-09 & 4 & 19.3 & 4 & 25.6 & 56 & 261 & 55 & 256 \\
\hline
$400^3$ & 1.24e-07 & 7.51e-11 & 3 & 168 & 3 & 168 & 75 & 2883 & 71 & 2562 \\
\hline
$63^3$ & 6.27e-05 & 1.68e-06 & 7 & 0.47 & 7 & 0.68 & 40 & 3.13 & 37 & 2.86 \\
\hline
$127^3$ & 8.87e-07 & 2.33e-08 & 5 & 4.40 & 5 & 5.96 & 50 & 40.8 & 48 & 40.27 \\
\hline
$255^3$ & 5.24e-08 & 3.44e-10 & 4 & 38.7 & 4 & 37.2 & 60 & 407.9 & 59 & 395.2 \\
\hline
$511^3$ & 9.81e-07 & 1.83e-11 & 3 & 462 & 3 & 318 & 87 & 6314 & 86 & 6184 \\
\hline
\end{tabular}
\caption{Comparison of the iterative GMRES solution for the sixth-order preconditioning system.}
\label{tab:sixth_order_results}
\end{table}

The results demonstrate several key findings. The convergence of the approximate solutions exhibits the expected fourth- and sixth-order accuracy, although for very fine grids (smaller than $1/200$) the maximum error for the sixth-order scheme plateaus due to the limitations of double-precision arithmetic.

The proposed GMRES-EigT and GMRES-PFFT combinations significantly outperform the same iterative method preconditioned with the FFT-based approaches (FFT2, FFT4, FFT6). We observe that merely increasing the approximation order of the preconditioner while maintaining Dirichlet boundary conditions only marginally improves convergence. In contrast, incorporating a lower-order approximation of the radiation boundary condition within the preconditioner radically reduces the number of iterations required for convergence and, consequently, reduces the total computation time by more than an order of magnitude.

A particularly outstanding property of this approach is that the number of iterations to convergence decreases as the size of the linear system increases. This property, first illustrated in our 1D model problem, is clearly maintained in this 3D setting. For instance, solving the fourth- and sixth-order systems on a $511^3$ grid requires approximately 5 minutes and only 3 iterations with the PFFT preconditioner, whereas our previous FFT-GMRES algorithm required about 1.7 hours and 86 iterations to converge.


\begin{table}[h!]
\centering
\begin{tabular}{|c|c|c|c|c|c|c|c|c|}
\hline
\textbf{Grid} & \multicolumn{2}{c|}{\textbf{$k=10$}} & \multicolumn{2}{c|}{\textbf{$k=20$}} & \multicolumn{2}{c|}{\textbf{$k=50$}} & \multicolumn{2}{c|}{\textbf{$k=100$}} \\
\cline{2-9}
 & $\mathbf{N_0}$ & \textbf{TP} & $\mathbf{N_0}$ & \textbf{TP} & $\mathbf{N_0}$ & \textbf{TP} & $\mathbf{N_0}$ & \textbf{TP} \\
\hline
 $100^3$ & 4 & 2.35 & 5 & 2.69 & 9 & 5.49 & 34 & 19.6 \\
\hline
 $200^3$ & 3 & 20.9 & 4 & 25.0 & 6 & 34.4 & 12 & 62.0 \\
\hline
 $400^3$ & 3 & 171 & 3 & 175 & 5 & 254 & 7 & 340 \\
\hline
 $511^3$ & 3 & 312 & 3 & 312 & 4 & 384 & 6 & 527 \\
\hline
\end{tabular}
\caption{PFFT-GMRES convergence for the test problem with different wavenumbers $k$.}
\label{tab:diff_k_results}
\end{table}

We observe that the number of iterations until convergence is a non-increasing function of the grid size for all considered values of $k$, which is consistent with our 1D model estimates. These results confirm the high efficiency of implementing the compact fourth- and sixth-order approximation schemes using the proposed PFFT-GMRES-type algorithm for a wide range of wavenumbers.

\subsubsection{Scattering Problem with a Single Inclusion}

Next, we will apply the developed method to a scattering problem. In this series of tests, we compute the wavefield response to a single spherical inclusion in non-attenuating media. We consider the same rectangular computational domain $\Omega = \{ 0 \leq x,y,z \leq 1 \}$ as in previous tests. The target inclusion is embedded within the subdomain $\Omega \cap \{ z > 0.5 \}$. The wavenumber function $k^2(x,y,z)$ is defined as:
\begin{equation}
k^2(\mathbf{x}) =\left\{ 
\begin{array}{ll}
1050 + 2.26i, & \text{for } (x-x_1)^2 + (y-y_1)^2 + (z-z_1)^2 \leq r_1^2 \\ 
439.2, & \text{otherwise}
\end{array}
\right.
\label{Inc_cf} 
\end{equation}
where $(x_1, y_1, z_1) = (0.5, 0.5, 0.7)$ and $r_1 = 0.1$. The right-hand side in (\ref{problem}) is given by $f = -(k^2 - k^2_0)u_0$, where $k^2_0 = 439.2$ and $u_0 = e^{ik_0z}$.

For these experiments, we use staggered grids with grid point coordinates defined by $\nu = h_{\nu}/2 + h_{\nu} \cdot j$, where $h_{\nu} = 1/N_{\nu}$, $j = 0, \dots, N_{\nu} - 1$, and $\nu = x, y, z$. We employ a second-order central difference approximation of the Sommerfeld-like boundary conditions (\ref{Sombc}). The PFFT method only requires updating the solution at one boundary layer, allowing us to use the most efficient PFFT2 algorithm as a preconditioner.

Table~\ref{tab:scattering_results} shows the convergence of the sixth-order preconditioned GMRES method with FFT6, EIG2, and PFFT2 preconditioners, using the same stopping criterion as previous tests. The relative residual (\textbf{rel-res}) is reported only for the PFFT-GMRES algorithm. Second- and fourth-order approximation methods display similar convergence properties.

\begin{table}[h!]
\centering
\begin{tabular}{|c|c|c|c|c|c|c|c|}
\hline
\textbf{Grid} & \textbf{rel-res} & \multicolumn{2}{c|}{\textbf{EigT2}} & \multicolumn{2}{c|}{\textbf{PFFT2}} & \multicolumn{2}{c|}{\textbf{FFT6}} \\
\cline{3-8}
 & & $\mathbf{N_0}$ & \textbf{TP} & $\mathbf{N_0}$ & \textbf{TP} & $\mathbf{N_0}$ & \textbf{TP} \\
\hline
$100^3$ & 3.07e-09 & 12 & 4.59 & 12 & 7.13 & 68 & 34.1 \\
\hline
$200^3$ & 9.13e-09 & 11 & 41 & 11 & 55 & 84 & 366 \\
\hline
$400^3$ & 9.37e-09 & 11 & 1099 & 11 & 775 & 100 & 5858 \\
\hline
$127^3$ & 4.36e-09 & 12 & 10.8 & 12 & 11.7 & 72 & 58.9 \\
\hline
$255^3$ & 1.1e-08 & 11 & 93 & 11 & 89 & 93 & 637 \\
\hline
$511^3$ & 9.09e-09 & 11 & 1101 & 11 & 771 & 100 & 5860 \\
\hline
\end{tabular}
\caption{Comparison of sixth-order preconditioned GMRES solution for non-homogeneous coefficient case.}
\label{tab:scattering_results}
\end{table}

The developed algorithms maintain the same favorable convergence properties for non-constant coefficients as observed in homogeneous media: the number of iterations until convergence remains a non-increasing function of grid size. The high-order FFT preconditioner (FFT6) reached the preset maximum of 100 iterations for grid sizes $400^3$ and $511^3$ without achieving convergence. The PFFT algorithm demonstrates the minimum total computational time required to reach the stopping criterion on all considered grid sizes larger than $200^3$.

To further illustrate the properties of the developed PFFT-GMRES algorithm, we examine the convergence history on a $511^3$ grid and the distribution of the real part of the numerical solution throughout the computational domain.

\begin{figure}[h!]
\centering
\includegraphics[height=8cm]{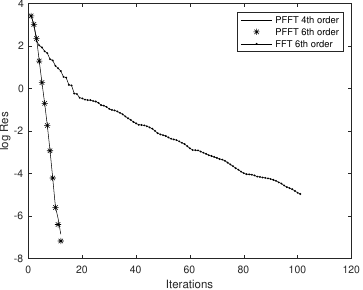}
\caption{Convergence history of the preconditioned PFFT-  and FFT-GMRES methods.}
\label{fig:convergence_history}
\end{figure}

Figure~\ref{fig:convergence_history} shows the convergence history of the relative residual (\textbf{rel-res}) for the preconditioned PFFT- and FFT-GMRES methods on a $511^3$ grid, using a logarithmic scale to clearly display the convergence behavior.

Figure~\ref{fig:field_amplitude} shows grayscale plots of the real part of the wavefield amplitude $u$ throughout the computational domain on a $511^3$ grid. The figure displays slices through the domain at $x = 0.5$ and $y = 0.5$, revealing the scattering pattern induced by the inclusion.

\begin{figure}[h!]
\centering
\includegraphics[height=8cm]{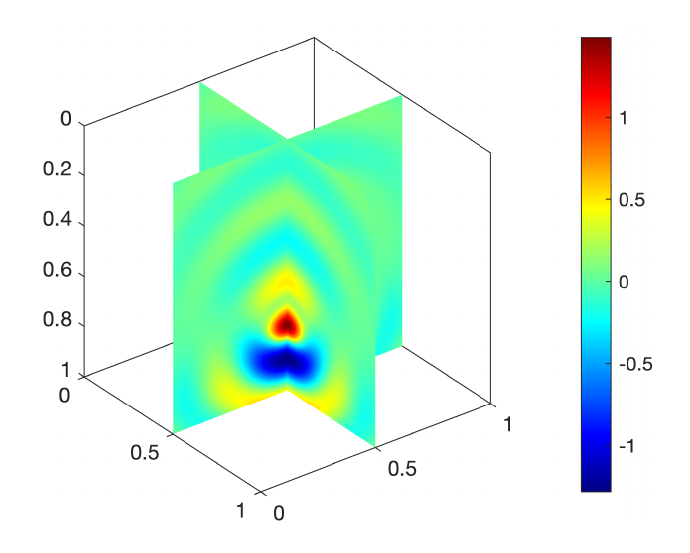}
\caption{Color plot of the real part of the wavefield amplitude $U$.}
\label{fig:field_amplitude}
\end{figure}

\section{Conclusions}

This paper has developed, analyzed, and implemented novel 3D compact higher-order approximation methods based on Partial Fast Fourier Transform (PFFT) and Eigenvector Transform (EigT) preconditioned GMRES algorithms.

The core strategy combines higher-order approximation schemes with a lower-order preconditioner that incorporates accurate approximations of the physical boundary conditions, particularly the absorption boundary conditions essential for scattering problems. Analysis of typical test problems reveals the remarkable properties of the developed methods, such as the number of iterations to convergence decreasing as the number of grid points (and thus the size of the resulting matrix) increases. This approach is especially attractive in situations where a robust lower-order solver already exists, but the original boundary value problem demands a more accurate numerical approximation.

To analyze the properties of the developed PFFT and EigT preconditioners, we considered their performance as direct second-order solvers for the 3D Helmholtz equation with constant coefficients. This analysis considered both two-point (staggered grid) and three-point (collocated grid) implementations of the boundary conditions. The computational cost was shown to correspond to a near-optimal \(O(N \log N)\) complexity for the PFFT case, typical of FFT-type solvers. For smaller grids or those with sizes non-optimal for FFTs, the EigT preconditioner presents a more efficient alternative, despite its higher \(O(N^{4/3})\) computational complexity. These analytical results were confirmed numerically on test problems with known analytical solutions.

For the sixth-order compact approximation of the nonhomogeneous 3D Helmholtz equation with Sommerfeld radiation boundary conditions on a $512^3$ grid, the typical number of iterations to achieve a $10^{-10}$ reduction in the relative $L_2$-norm of the residual was only 11. The total computational time required was approximately 10 minutes on a single core of an Apple M3 Max processor. The method was successfully validated across realistic parameter ranges typical for electromagnetic scattering applications.

It is important to note that the Sommerfeld-like boundary condition implemented here \textbf{in the non-constant coefficient case} corresponds to a first-order approximation of the true Sommerfeld condition at infinity on an unbounded domain. Therefore, the direct application of higher-order approximations solely to the differential operator, without a corresponding high-order treatment of the boundary condition, is not always justified for scattering problems with varying material properties. For constant-coefficient validation problems, the boundary condition accuracy is less critical, and we use higher-order approximation schemes primarily to demonstrate convergence and accuracy. Future work will focus on extending this framework to more advanced boundary treatments, such as higher-order absorbing boundary conditions (ABCs) or Perfectly Matched Layers (PMLs) (see, e.g., [10]), and on large-scale parallel implementation using hybrid combinations of OpenMP, MPI, and CUDA programming models.

 \section*{Acknowledgments}

This research was supported in part by the U.S. Department of Energy, Office of Science, Office of Advanced Scientific Computing Research's Applied Mathematics program under Contract No. DE-AC02-05CH11231 at Lawrence Berkeley National Laboratory.

\section*{Declaration of generative AI and AI-assisted technologies in the manuscript preparation process}

During the preparation of this work, the authors used Grammarly.com and DeepSeek.com services to improve the readability and grammar of the manuscript. After using these services, the authors reviewed and edited the content as needed and take full responsibility for the content of the published article.












\bibliographystyle{elsarticle-num}

\begin{thebibliography}{99}

\bibitem{bs}
I.~Babuska and S.~Sauter.
Is the Pollution effect of the FEM avoidable for the Helmholtz equation considering high wave numbers?
\emph{SIAM Rev.}, 42:451--484, 2000.

\bibitem{bgt1}
A.~Bayliss, C.~Goldstein, and E.~Turkel.
An iterative method for the Helmholtz equation.
\emph{J. Comput. Phys.}, 49:443--467, 1983.

\bibitem{bgt2}
A.~Bayliss, C.~Goldstein, and E.~Turkel.
On accuracy conditions for the numerical computation of waves.
\emph{J. Comput. Phys.}, 59:396--404, 1985.

\bibitem{bpx}
J.H.~Bramble, J.E.~Pasciak, and J.~Xu.
The analysis of multigrid algorithms for nonsymmetric and indefinite problems.
\emph{Math. Comp.}, 51:389--414, 1988.

\bibitem{btt}
S.~Britt, S.~Tsynkov, and E.~Turkel.
Numerical simulation of time-harmonic waves in inhomogeneous media using compact high order schemes.
\emph{Commun. Comput. Phys.}, 9:481--496, 2011.

\bibitem{CheneyIsaacson}
M.~Cheney and D.~Isaacson.
Inverse problems for a perturbed dissipative half-space.
\emph{Inverse Problems}, 11:865--888, 1995.

\bibitem{ColtonKress}
D.~Colton and R.~Kress.
\emph{Inverse Acoustic and Electromagnetic Scattering Theory}.
Springer, 2nd edition, 1998.

\bibitem{KressNA}
R.~Kress.
\emph{Numerical Analysis}.
Springer, 1st edition, 1998.

\bibitem{dhr}
J.~Douglas Jr, J.L.~Hensley, and J.E.~Roberts.
Alternating-direction iteration method for Helmholtz problems.
Tech. Report No. 214, Mathematics Department, Purdue University, West Lafayette, IN, 1993.

\bibitem{eo}
H.C.~Elman and D.P.~O'Leary.
Efficient iterative solution of the three-dimensional Helmholtz equation.
\emph{J. Comput. Phys.}, 142:163--181, 1998.

\bibitem{eo1}
H.C.~Elman and D.P.~O'Leary.
Eigenanalysis of some preconditioned Helmholtz problems.
\emph{Numer. Math.}, 83:231--257, 1999.

\bibitem{EG}
O.~Ernst and G.H.~Golub.
A domain decomposition approach to solving the Helmholtz equation with a radiation boundary condition.
In A.~Quarteroni, H.~Periaux, Y.~Kuznetsov, and O.~Widdlund, editors, \emph{Domain Decomposition in Science and Engineering}, pages 177--192. Amer. Math. Soc., 1994.

\bibitem{GV}
G.H.~Golub and C.F.~VanLoan.
\emph{Matrix Computations}.
The Johns Hopkins University Press, Baltimore, MD, 2nd edition, 1989.

\bibitem{gy}
Y.A.~Gryazin.
Preconditioned Krylov subspace methods for sixth order compact approximations of the Helmholtz equation.
\emph{ArXiv:1212.1397 [math.NA]}, 2012.

\bibitem{yg1}
Y.A.~Gryazin.
A compact sixth order scheme combined with GMRES method for the 3D Helmholtz equation.
In \emph{Proc. of The 10th International Conference on Mathematical and Numerical Aspects of Waves}, pages 339--442, Vancouver, Canada, July 24--29 2011.

\bibitem{yg2}
Y.~A.~Gryazin.
Preconditioned Krylov subspace methods for sixth order compact approximations of the Helmholtz equation.
\emph{ISRN Computational Mathematics}, pages 1--15, 2014.
DOI: 10.1155/2014/745849.

\bibitem{yg3}
Y.~A.~Gryazin.
High order approximation compact schemes for forward subsurface scattering problems.
In \emph{Proceedings of the SPIE 9077, Radar Sensor Technology XVIII Conference}, pages 1--9, May 2014.
DOI: 10.1117/12.2050189.

\bibitem{ggl}
R.~Gonzales, Y.~Gryazin, and Y.T.~Lee.
Parallel FFT algorithms for high-order approximations on three-dimensional compact stencils.
\emph{Parallel Computing}, 103:102757, 2021.

\bibitem{gkl}
Y.A.~Gryazin, M.V.~Klibanov, and T.R.~Lucas.
GMRES computation of high frequency electrical field propagation in land mine detection.
\emph{J. Comput. Phys.}, 158:98--115, 2000.

\bibitem{Hayt}
W.H.~Hayt.
\emph{Engineering Electromagnetics}.
McGraw Hill, 3rd edition, 1974.

\bibitem{hrt}
E.~Heikkola, T.~Rossi, and J.~Toivanen.
Fast direct solution of the Helmholtz equation with a perfectly matched layer or an absorbing boundary condition.
\emph{Internat. J. Numer. Methods Engrg.}, 57(14):2007--2025, 2003.

\bibitem{hmmo}
J.~Heys, T.~Manteuffel, S.~McCormick, and L.~Olson.
Algebraic multigrid for higher-order finite elements.
\emph{J. Comp. Phys.}, 204:520--532, 2005.

\bibitem{l}
S.H.~Lui.
\emph{Numerical Analysis of Partial Differential Equations}.
Wiley, 2011.

\bibitem{kim}
S.~Kim.
Parallel multidomain iterative algorithms for the Helmholtz wave equation.
\emph{Appl. Numer. Math.}, 59:411--429, 1995.

\bibitem{nsd}
M.~Nabavi, M.H.K.~Siddiqui, and J.~Dargahi.
A new 9-point sixth-order accurate compact finite difference method for the Helmholtz equation.
\emph{Journal of Sound and Vibration}, 307:972--982, 2007.

\bibitem{pet}
P.G.~Petropoulos.
On the termination of the perfectly matched layer with local absorbing boundary conditions.
\emph{J. Comput. Phys.}, 143:665--673, 1998.

\bibitem{orsz}
S.~Orszag.
Spectral methods for problems in complex geometries.
\emph{J. Comp. Phys.}, 37:70--92, 1980.

\bibitem{s}
Y.~Saad.
\emph{Iterative Methods for Sparse Linear Systems}.
SIAM, 2nd edition, 2003.

\bibitem{sn}
A.A.~Samarskii and E.S.~Nikolaev.
\emph{Numerical Methods for Grid Equations}.
Birkhäuser, 1989.

\bibitem{sut}
G.~Sutmann.
Compact finite difference schemes of sixth order for the Helmholtz equation.
\emph{J. Comp. Appl. Math.}, 203:15--31, 2007.

\bibitem{st}
I.~Singer and E.~Turkel.
\emph{J. Comput. Acoust.}, 14(3):339--351, 2006.

\bibitem{umo}
N.~Umetani, S.P.~MacLachlan, and C.W.~Oosterlee.
\emph{Numer. Linear Algebra Appl.}, 8:603--626, 2009.

\bibitem{zs}
Y.~Zhuang and X.-H.~Sun.
A high-order fast direct solver for singular Poisson equations.
\emph{J. Comput. Phys.}, 171:79--94, 2001.

\bibitem{Lele92}
S.K.~Lele.
Compact finite difference schemes with spectral-like resolution.
\emph{Journal of Computational Physics}, 103:16--42, 1992.

\bibitem{mp}
T.A.~Manteuffel and S.V.~Parter.
Preconditioning and boundary conditions.
\emph{SIAM J. Numer. Anal.}, 27:656--694, 1998.

\bibitem{TW}
J.~Toivanen and M.~Wolfmayr.
A fast Fourier transform based direct solver for the Helmholtz problem.
\emph{Numer. Linear Algebra Appl.}, 27(3):e2283, 2020.

\bibitem{tggt}
E.~Turkel, D.~Gordon, R.~Gordon, and S.~Tsynkov.
Compact 2D and 3D sixth order schemes for the Helmholtz equation with variable wave number.
\emph{Journal of Computational Physics}, pages 272--287, 2012.

\bibitem{Gordon}
D.~Gordon and R.~Gordon.
Solution methods for linear systems with large off-diagonal elements and discontinuous coefficients.
\emph{Comp. Model. Eng. Sci.}, 53:23--45, 2009.

\bibitem{FFTW_doc}
M.~Frigo and S.~Johnson.
\emph{FFTW Manual}.
Massachusetts Institute of Technology, January 2003.

\bibitem{conv_diff}
J.~Kalita, A.~Dass, and D.~Dalal.
A transformation-free HOC scheme for steady convection-diffusion on non-uniform grids.
\emph{Int. J. Numer. Meth. Fluids}, 44:33--53, 2004.

\bibitem{chen2021complex}
H.~Chen, G.~Evéquoz, and T.~Weth.
Complex solutions and stationary scattering for the nonlinear Helmholtz equation.
\emph{SIAM Journal on Mathematical Analysis}, 53(2):2349--2372, 2021.

\bibitem{galkowski2025numerical}
J.~Galkowski and E.A.~Spence.
Numerical analysis of the high-frequency Helmholtz equation using semiclassical analysis.
\emph{arXiv preprint arXiv:2511.15287}, 2025.

\bibitem{lucido2024helmholtz}
M.~Lucido, G.A.~Casula, G.~Chirico, M.D.~Migliore, D.~Pinchera, and F.~Schettino.
Helmholtz--Galerkin technique in dipole field scattering from buried zero-thickness perfectly electrically conducting disk.
\emph{Applied Sciences}, 14(13):5544, 2024.

\bibitem{matsumoto2024fast}
Y.~Matsumoto.
Fast wavefield evaluation method based on modified proxy-surface-accelerated interpolative decomposition for two-dimensional scattering problems.
\emph{JSIAM Letters}, 16:61--64, 2024.

\bibitem{juraev2023applications}
D.A.~Juraev, P.~Agarwal, E.E.~Elsayed, and N.~Targyn.
Applications of the Helmholtz equation.
\emph{Advanced Engineering Days (AED)}, 8:28--30, 2023.

\bibitem{juraev2024helmholtz}
D.A.~Juraev, P.~Agarwal, E.E.~Elsayed, and N.~Targyn.
Helmholtz equations and their applications in solving physical problems.
\emph{Advanced Engineering Science}, 4:54--64, 2024.

\bibitem{klibanov2015two}
M.V.~Klibanov and V.G.~Romanov.
Two reconstruction procedures for a 3D phaseless inverse scattering problem for the generalized Helmholtz equation.
\emph{Inverse Problems}, 32(1):015005, 2015.

\bibitem{kow2024minimization}
P.-Z.~Kow, M.~Salo, and H.~Shahgholian.
A minimization problem with free boundary and its application to inverse scattering problems.
\emph{Interfaces Free Bound.}, 26(3):415--471, 2024.

\bibitem{yang2023novel}
A.L.~Yang.
A novel deep neural network algorithm for the Helmholtz scattering problem in the unbounded domain.
\emph{International Journal of Numerical Analysis \& Modeling}, 20(5), 2023.

\end{thebibliography}

\end{document}